\newtheorem{theorem}{Theorem}
\newtheorem{cor}[theorem]{Corollary}
\newtheorem{lemma}[theorem]{Lemma}
\theoremstyle{definition}
\newtheorem*{ack}{Acknowledgement}
\theoremstyle{remark}
\newtheorem{fact}{{\rm Fact}}
\long\def\red#1{\bgroup
 \color{red}#1\egroup}
\newif\ifcnote\cnotefalse
\title[On Polynomial Rings Over Nil Rings in Several Variables]
    {On Polynomial Rings Over Nil Rings in Several Variables and the Central Closure of Prime Nil Rings}
    \author[Chebotar, Ke, Lee and Puczy\l owski]
{M. Chebotar, W.-F. Ke, P.-H. Lee and E. R. Puczy\l owski }
\keywords{nil ring, polynomial ring, Brown-McCoy radical,
K\"{o}the's problem.}
\subjclass[2000]{16N40;16N60;16N80}
\address{Department of Mathematical Sciences,
Kent State University, Kent OH 44242, U.S.A.}
\email{chebotar@math.kent.edu}
\address{Department of Mathematics and Research Center for Theoretical Sciences, National Cheng
Kung University, Tainan 701, Taiwan} \email{wfke@mail.ncku.edu.tw}
\address{Department of Mathematics, National Taiwan University,
and National Center for Theoretical Sciences, Taipei 106, Taiwan} \email{phlee@math.ntu.edu.tw}
\address{Institute of Mathematics, University of Warsaw, Warsaw,
Poland} \email{edmundp@mimuw.edu.pl}
 \keywords{nil ring, Brown-McCoy radical, prime ring, central closure}
\begin{document}
\begin{abstract}
We prove that the ring of polynomials in several commuting indeterminates over a nil ring cannot be homomorphically mapped onto a ring with identity, i.e. it is Brown-McCoy radical. It answers a question posed by Puczy{\l}owski and Smoktunowicz.
We also show that the central closure of a prime nil ring cannot be a simple ring with identity solving a problem due to Beidar.
\end{abstract}
\maketitle

\section{Introduction}

Let $R[x]$ be a polynomial ring over a nil ring $R$.
Many recent papers on polynomial rings in one indeterminate over nil rings were inspired by the following three questions:

\begin{enumerate}
\item Is it true that $R[x]$ is nil?
\item Is it true that $R[x]$ is Jacobson radical?
\item Is it true that $R[x]$ is Brown-McCoy radical?
\end{enumerate}

The first question was posed by Amitsur \cite{A71} and Krempa \cite{K72}, and was answered negatively by Smoktunowicz \cite{S00}. The second question is open, and is equivalent to the famous Koethe's Conjecture as shown by Krempa in \cite{K72}. The third question was posed by Puczy\l owski \cite{P91}, and was answered positively by Puczy\l owski  and Smoktunowicz \cite{PS98}. The results answering questions (1) and (3) can be considered as approaches towards a solution of the Koethe Conjecture.

It is not surprising that the question whether the ring of polynomials in two or more commuting indeterminates over a nil ring is Brown-McCoy radical attracted a lot of attention. Originally posed in the paper by Puczy\l owski  and Smoktunowicz \cite[Question 1a]{PS98}, this question appears in a number of papers and surveys dedicated to the Koethe Conjecture and related topics such as \cite[p.~210]{FW03}, \cite[Question 2.12a]{P06}, \cite[p.140]{PW07}, \cite[Question 26]{S01} and \cite[p.~235]{S03}, to mention just a few. The problem again had a clear connection to Koethe's Conjecture, as the theorem due to Smoktunowicz \cite{S03} showed that the existence of a nil ring $R$ such that $R[x,y]$ is not Brown-McCoy radical would give a counterexample to the Koethe Conjecture. In 2008 the authors published
the paper \cite{CKLP08} where they proved that a polynomial ring $R[x,y]$ is Brown-McCoy radical provided that $R$ is a nil algebra over a field of nonzero characteristic. However, it is well-known that questions (1)--(3) may have different answers for nil algebras over fields of different nature. For example, it was not clear whether the result of \cite{CKLP08} could be extended to nil algebras over the field of rational numbers. One more doubt came from another striking result by Smoktunowicz \cite[Theorem 1.3]{S09}: Over every countable field $K$ there is a nil algebra $R$ such that the polynomial ring
$R[x_1,\ldots,x_6]$ in six commuting indeterminates $x_1,\ldots,x_6$ over $R$ contains a noncommutative
free $K$-algebra of rank two. The existence of such a surprising ring makes one imagine that it would be quite possible that this polynomial ring or a similar ring can be mapped onto a ring with $1$.

Motivated by the above question, we study and present the following result on prime nil rings.

\begin{theorem}\label{Main}
Let $R$ be a prime nil ring with extended centroid $C$. Let $c_{1},\ldots,c_{n}\in C$, $n\ge 1$, be such that $S=R[c_{1},\ldots,c_{n}]$ is a simple ring. Then $S$ has a zero center.
\end{theorem}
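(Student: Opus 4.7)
My plan is a proof by contradiction. Suppose $Z(S)\neq 0$. A standard simple-ring argument---any nonzero $z\in Z(S)$ generates the two-sided ideal $zS$, which must equal $S$ by simplicity; then $z$ is not a zero divisor, so one can extract a two-sided identity and invert every nonzero element of $Z(S)$---shows that $S$ has an identity $1$ and that $Z(S)$ is a field. Because $Z(S)$ centralises the subring $R$ inside the prime ring $Q_{s}(R)$, and the centraliser of $R$ in $Q_{s}(R)$ equals the extended centroid, we obtain $Z(S)\subseteq C$; in particular $1_{S}=1_{Q_{s}(R)}\in C$.

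Next let $D=\mathbb{Z}[c_{1},\dots,c_{n}]\subseteq C$, the unital commutative subring generated by the $c_{i}$'s. Because $D$ is central in $Q_{s}(R)$, a direct calculation shows $RD$ is a two-sided ideal of $S$; it contains $R\neq 0$, so simplicity forces $RD=S$, whence $1=\sum_{i}r_{i}d_{i}$ for some $r_{i}\in R$ and $d_{i}\in D$. From the definition of the extended centroid, each $d_{i}$ carries some nonzero ideal $J_{i}\triangleleft R$ into $R$; by primeness $J:=\bigcap_{i}J_{i}\neq 0$. For $a\in J$, centrality of the $d_{i}$ gives
\[
a \;=\; a\cdot 1 \;=\; \sum_{i}a r_{i}d_{i} \;=\; \sum_{i}(a d_{i})\,r_{i}\;\in\; R^{2},
\]
so $J\subseteq R^{2}$. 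Iterating along the nested family of nonzero ideals
\[
J^{(k)}\;=\;\{a\in R:\, d_{i_{1}}\cdots d_{i_{k}}\,a\in R\text{ for every length-}k\text{ tuple}\}
\]
(nonzero by primeness and finite intersection) one similarly obtains $J^{(k)}\subseteq R^{k+1}$ for every $k\ge 1$.

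The decisive obstacle is converting these containments into an outright contradiction. A prime nil ring may perfectly well satisfy $R^{k}=R$ for every $k$, so deep power containment alone is insufficient. The crucial step should be upgrading the structural identity $1=\sum r_{i}d_{i}$ into a concrete algebraic relation \emph{inside} $R$ (or inside a suitable central localisation of $R$) that is genuinely incompatible with $R$ being both prime and nil. I would look for such an upgrade along the lines of the one-variable Brown--McCoy argument of Puczy\l owski--Smoktunowicz \cite{PS98}, in which invertibility of a cleverly chosen polynomial in the central variable combines with the nilness of the $R$-coefficients to force an algebraic relation on $c$ over $R$ that contradicts $c$ lying in the extended centroid. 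Extending this mechanism to several commuting central variables, and exploiting the leverage supplied by primeness via the ideal $J$ together with the field structure of $C$, is where I expect the essential new work of the proof to lie.
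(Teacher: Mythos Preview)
Your setup is correct and your instinct is right: the crux is indeed a multivariable extension of the Puczy\l owski--Smoktunowicz length-reduction argument. But you have explicitly left the decisive step open, and the containments $J^{(k)}\subseteq R^{k+1}$ lead nowhere (as you yourself note, a prime nil ring can satisfy $R^{k}=R$). What the actual proof does, and what your proposal is missing, is the following two-stage mechanism.

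First, one pushes the PS idea to obtain a \emph{homogeneous} representation $1=\sum_{|\alpha|=m}a_{\alpha}c^{\alpha}$ with all $a_{\alpha}$ lying in a prescribed power $I^{N}$ of a nonzero ideal $I\lhd R$ chosen so that $c_{i}c_{j}^{-1}I\subseteq R$ for all $i,j$. Second---and this is the genuinely new idea, which nothing in your outline anticipates---one regards the exponent vectors as lattice points of the simplex $P=\{\alpha\in\mathbb{R}^{n}_{\ge0}:|\alpha|=m\}$ and invokes a convex-geometry lemma: $P$ decomposes as $K_{1}\cup\cdots\cup K_{\mu}$ where each $K_{i}$ has diameter at most $2$ and every tail $\bigcup_{i\ge\nu}K_{i}$ is again a convex polytope. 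Bounded diameter means that, using the relations $ac^{\alpha}=bc^{\beta}$ available whenever $a\in I^{|\alpha-\beta|_{1}}$, the partial sum $w=\sum_{\alpha\in K_{1}}a_{\alpha}c^{\alpha}$ can be rewritten as a single monomial $rc^{\alpha_{0}}$ with $r\in R$, hence $w$ is nilpotent. Writing $1=w+q$ and raising to a high power kills $w$; convexity of the tail (via the Minkowski-sum identity $\ell P'+\ell'P'=(\ell+\ell')P'$) guarantees that after rewriting, the exponents of the new representation of $1$ sit in a scaled copy of $\bigcup_{i\ge2}K_{i}$. Iterating $\mu-1$ times leaves $1$ equal to a single nilpotent monomial, the desired contradiction. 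Your proposal contains neither the homogeneity reduction nor any analogue of this geometric peeling argument, and without them the proof cannot be completed.
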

The extended centroid $C$ of a prime ring $R$ can be defined as the center of the maximal right ring of quotients of $R$,
and the ring $RC$ is called the central closure \cite[p.~68]{BMM96}. The central closure approach to handle the above mentioned problem was advocated by Ferrero and Wisbauer \cite{FW03} and Beidar \cite{PW07}, but again there were some serious doubts whether such approach could work. One obstacle was that there were no known solutions of question (3) that relied on the central closure approach. Another and more serious obstacle was connected to the following question by Beidar \cite[p.~140]{PW07}: Does there exist a prime ring with zero center whose central closure is a simple ring with identity? It is natural to expect that such ring does not exist, but the example constructed by Chebotar \cite{C08} showed that the situation is more complicated. Fortunately, together with some ideas from Convex Geometry, the central closure approach does work.

We conclude this section with some applications of Theorem~\ref{Main}.

First of all, the following theorem follows immediately from Theorem \ref{Main} and \cite[Theorem~4.12]{FW03}, and answers a question posed by Puczy\l{}owski and Smoktunowicz \cite[Question~1a]{PS98}.

\begin{theorem}\label{PS}
Let $R$ be a nil ring. Then the ring of polynomials in two or more commuting indeterminates over $R$ is Brown-McCoy radical.
\end{theorem}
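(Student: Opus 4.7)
The plan is to argue by contradiction, using \cite[Theorem~4.12]{FW03} as a bridge and Theorem~\ref{Main} as the decisive input. Suppose that for some nil ring $R$ and some integer $n\geq 2$, the polynomial ring $R[x_{1},\ldots,x_{n}]$ is \emph{not} Brown-McCoy radical; equivalently, there is a surjective homomorphism from $R[x_{1},\ldots,x_{n}]$ onto a simple ring $A$ with identity.

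The first step is to invoke \cite[Theorem~4.12]{FW03}. This theorem was crafted precisely to reduce Brown-McCoy-type questions on polynomial rings in several commuting indeterminates to questions about the central closure of a prime homomorphic image. Applied in the present situation, it furnishes a prime ideal $P$ of $R$ together with elements $c_{1},\ldots,c_{n-1}$ in the extended centroid $C$ of the ring $\bar{R}=R/P$ (which is automatically prime and nil, since nilness is inherited by quotients) such that the subring $S=\bar{R}[c_{1},\ldots,c_{n-1}]$ of the central closure $\bar{R}C$ is a simple ring with identity. Intuitively, one of the indeterminates is absorbed into $\bar{R}$ while the remaining $n-1$ indeterminates yield central elements of $C$.

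With this reduction in hand, I would then apply Theorem~\ref{Main} directly to the prime nil ring $\bar{R}$ and the central elements $c_{1},\ldots,c_{n-1}\in C$: because $S$ is simple, the theorem forces the center of $S$ to be zero. This flatly contradicts the fact that $S$ has an identity element, which is a nonzero central element. The contradiction shows that no homomorphism from $R[x_{1},\ldots,x_{n}]$ onto a simple ring with identity can exist, so $R[x_{1},\ldots,x_{n}]$ is Brown-McCoy radical.

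The main substantive obstacle, already resolved in the literature, lies in \cite[Theorem~4.12]{FW03}: one needs to know that any obstruction to Brown-McCoy radicality of $R[x_{1},\ldots,x_{n}]$ is captured by a single prime image of $R$ together with finitely many central elements from its extended centroid, producing a simple ring with identity of the specific form appearing in Theorem~\ref{Main}. Once that reduction is accepted, Theorem~\ref{Main} supplies the contradiction essentially for free, which is why the deduction is called immediate.
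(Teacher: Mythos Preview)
Your proposal is correct and follows the same route as the paper: assume the polynomial ring is not Brown--McCoy radical, reduce via \cite[Theorem~4.12]{FW03} to a prime nil image together with finitely many elements of its extended centroid generating a simple ring with identity, then invoke Theorem~\ref{Main} for the contradiction. One cosmetic difference: the paper first passes explicitly to $R/(M\cap R)$ for a maximal ideal $M$ of the polynomial ring (so that $R$ becomes prime with $M\cap R=0$) and then applies \cite[Theorem~4.12]{FW03} to obtain an unspecified number $m$ of central elements with $RC=R[c_1,\dots,c_m]$ simple with identity, rather than exactly $n-1$ elements as in your ``one indeterminate absorbed'' heuristic; since Theorem~\ref{Main} applies for any number of $c_i$'s, this does not affect the argument.
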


\begin{proof}
Assume on the contrary that $R[x_{1},\dots,x_{n}]$ is not Brown-McCoy radical for some set $\{x_{1},\dots,x_{n}\}$ of commuting indeterminates over $R$. Then there exists a maximal ideal $M$ of $R[x_{1},\dots,x_{n}]$ such that $R[x_{1},\dots,x_{n}]/M$ is a simple ring with identity. Replacing $R$ by $R/(M\cap R)$, we may assume that $R$ is prime and $M\cap R=0$. By \cite[Theorem~4.12]{FW03}, there exist $c_{1},\dots,c_{m}\in C$, the extended centroid of $R$, such that $RC=R[c_{1},\dots,c_{m}]$ is simple with identity, a contradiction by Theorem~\ref{Main}.
\end{proof}

As a corollary, the next result answers a question \cite[Question 2.13b]{P06} raised by Beidar.

\begin{cor}
The central closure of a prime nil ring contains no identity.
\end{cor}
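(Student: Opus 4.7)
The plan is to argue by contradiction, mirroring the reduction used in the proof of Theorem~\ref{PS} and concluding with an appeal to Theorem~\ref{Main}. Suppose $R$ is a prime nil ring whose central closure $RC$ contains an identity. Since $RC$ has an identity, it admits a maximal two-sided ideal $M$ (possibly zero), and $RC/M$ is a simple ring with identity. The composition $R\hookrightarrow RC\twoheadrightarrow RC/M$ has kernel $M\cap R$, and my first step would be to verify that this is a prime ideal of $R$, exactly as in the proof of Theorem~\ref{PS}: one uses that any ideal $I$ of $R$ extends to an ideal of $RC$, so primeness of the simple quotient $RC/M$ transfers back to $R/(M\cap R)$.

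Replacing $R$ by $R/(M\cap R)$, I may then assume that $R$ is prime and nil and injects into a simple ring with identity. At this point I invoke \cite[Theorem~4.12]{FW03} precisely as in the proof of Theorem~\ref{PS} to produce $c_{1},\dots,c_{m}\in C$ such that $S:=R[c_{1},\dots,c_{m}]=RC$ is simple with identity. Theorem~\ref{Main} now applies to $S$ and forces its center to be zero, which contradicts the fact that $S$ contains the identity as a nonzero central element.

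The main obstacle is the reduction step---passing from the assumption that $RC$ merely contains an identity to the conclusion that $RC$ is in fact simple with identity and equal to $R[c_{1},\dots,c_{m}]$ for finitely many central elements---which rests entirely on \cite[Theorem~4.12]{FW03}. Once this technical input is granted, together with the routine verification that $M\cap R$ is prime, the contradiction with Theorem~\ref{Main} is immediate, so no genuinely new work is required beyond what has already been done for Theorem~\ref{PS}.
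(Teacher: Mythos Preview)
Your approach differs from the paper's and carries a real gap at the step where you invoke \cite[Theorem~4.12]{FW03}. In the proof of Theorem~\ref{PS} that theorem is applied to a prime ring $R$ for which a polynomial ring $R[x_1,\dots,x_n]$ admits a simple unital quotient meeting $R$ trivially; it is this polynomial-ring hypothesis that yields $c_1,\dots,c_m\in C$ with $RC=R[c_1,\dots,c_m]$ simple and unital. After your reduction you only know that the new ring $\bar R=R/(M\cap R)$ is prime, nil, and embeds in the simple unital ring $RC/M$. You have not exhibited $RC/M$ (or any simple unital ring containing $\bar R$) as a quotient of a polynomial ring $\bar R[x_1,\dots,x_n]$, so the hypothesis you need for \cite[Theorem~4.12]{FW03} is missing. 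To supply it you would have to write the identity of $RC/M$ as a finite $\bar R$-linear combination of central elements and pass to the corresponding finite polynomial ring---but once you do that, you are already in the setting of Theorem~\ref{PS} and the rest of your detour is unnecessary.

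Indeed, the paper's proof is this direct shortcut: write $1=\sum_{i=1}^{n}a_ic_i$ with $a_i\in A$ and $c_i\in C$, let $R$ be the nil subring of $A$ generated by $a_1,\dots,a_n$, and observe that the evaluation map $R[x_1,\dots,x_n]\to R[c_1,\dots,c_n]$ sends $a_1x_1+\dots+a_nx_n$ to $1$. Thus a polynomial ring over a nil ring maps onto a ring with identity, contradicting Theorem~\ref{PS}. No maximal ideal of $RC$, no second passage to a quotient, and no appeal to \cite[Theorem~4.12]{FW03} or Theorem~\ref{Main} is needed. Your route can be salvaged, but only by inserting exactly this observation, at which point the extra machinery becomes redundant.
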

\begin{proof}
Let $A$ be a prime nil ring. Assume that $a_{1}c_{1}+\dots+a_{n}c_{n}=1$ where $a_{i}\in A$ and $c_{i}\in C,$ the extended centroid of $A$.
Let $R$ be a subring of $A$ generated by $a_1,\ldots,a_n$.
The map $\theta:R[x_{1},\dots,x_{n}]\to R[c_{1},\dots,c_{n}]$ given by $\theta(p(x_{1},\dots,x_{n}))=p(c_{1},\dots,c_{n})$ is a ring
epimorphism, and $\theta(c_{1}x_{1}+\dots+c_{n}x_{n})=a_{1}c_{1}+\dots+a_{n}c_{n}=1$, which is not possible by Theorem \ref{PS}.
\end{proof}
The next result answers an old question posted by Puczy\l{}owski \cite[Question 13b]{P91}.  It can be deduced from
Theorem \ref{PS} by means of \cite[Theorem~5.1]{FW03} or \cite[Remark~1]{S03}.
\begin{theorem}\label{T4}
Let $R$ be a nil ring. Then the ring of polynomials in two or more noncommuting indeterminates over $R$ is Brown-McCoy radical.
\end{theorem}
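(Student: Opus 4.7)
The plan is to reduce the non-commuting case to the commuting case handled by Theorem~\ref{PS}. Suppose, for contradiction, that for some $n\ge 2$ the free associative algebra $R\langle x_1,\dots,x_n\rangle$ over the nil ring $R$ is not Brown-McCoy radical. Then there exists a maximal ideal $M$ such that $S=R\langle x_1,\dots,x_n\rangle/M$ is a simple ring with identity. Replacing $R$ by $R/(R\cap M)$, which is still a nil ring, we may further assume that $R$ is prime and embeds into $S$. This mirrors the opening reduction in the proof of Theorem~\ref{PS}.

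The crucial step is then to convert this simple unital image of a free algebra over $R$ into a simple unital image of a polynomial ring in finitely many commuting indeterminates over (a nil quotient of) $R$. Precisely such a conversion is supplied by \cite[Theorem~5.1]{FW03}, with an alternative formulation available in \cite[Remark~1]{S03}. These results provide the technical bridge between commuting and non-commuting polynomial extensions of a nil ring when testing for Brown-McCoy radicality, and they exploit in an essential way the nil hypothesis on the coefficient ring.

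Once this conversion is carried out, one obtains a simple unital quotient of $R'[y_1,\dots,y_m]$ for some nil quotient $R'$ of $R$ and some integer $m\ge 1$. Theorem~\ref{PS} (in the case $m\ge 2$) together with the earlier theorem of Puczy{\l}owski and Smoktunowicz \cite{PS98} (in the case $m=1$) rule this out, since both assert Brown-McCoy radicality for commutative polynomial rings over a nil ring. The resulting contradiction completes the proof.

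The only non-routine step is the transfer from the free to the commutative polynomial setting, which is imported wholesale from the cited sources; the remaining ingredients, namely the standard Zorn's Lemma extraction of a simple unital quotient and the appeal to Theorem~\ref{PS}, are formal. I would therefore expect the written proof to be short: one sentence setting up a simple unital image, one sentence quoting \cite{FW03} or \cite{S03} to obtain the corresponding commutative simple unital image, and one sentence applying Theorem~\ref{PS} to conclude.
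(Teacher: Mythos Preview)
Your proposal is correct and matches the paper's own treatment: the paper gives no detailed argument for Theorem~\ref{T4} either, stating only that it ``can be deduced from Theorem~\ref{PS} by means of \cite[Theorem~5.1]{FW03} or \cite[Remark~1]{S03}.'' Your write-up simply unpacks this deduction a bit further (setting up the simple unital quotient, passing to a prime coefficient ring, then invoking the cited transfer results and Theorem~\ref{PS}), which is exactly the intended route; the extra reduction steps you describe are harmless scaffolding around the same black-box citation.
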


Finally, we make a remark that the above results can help to answer some other open problems in Radical Theory. For example, using Theorem \ref{PS} together with \cite[Proposition 5.5]{FW03}, one sees that for any ring $R$ the upper nil radical is contained in the $u$-strongly prime radical of $R$, a question formulated by Kau\v cikas and Wisbauer \cite[Problem]{KW}. Also, as noted in \cite[p.~7]{P06}, Theorem~\ref{T4} implies that for two nil algebras $A$ and $B$, the algebra $A\otimes B$ is Brown-McCoy radical. This answers Question 23 in \cite{P91}.

\section{Simple Facts from Convex Geometry}

To make the paper a bit self-contained we record several facts about convex polytopes which will be used in the proof of the Main Theorem. For a concise reading about basic properties of convex polytopes, one is referred to \cite[Chapter~16]{GO04}.

A convex polytope in the $n$-dimensional real space $\mathbb{R}^n$ is the convex hull of a finite set. Here, the convex hull of the set $X=\{x_1,\dots,x_m\}\subseteq\mathbb R^n$ is defined to be
$$
\textup{conv}(X) = \biggl\{\sum_{i=1}^m \lambda_ix_i\biggm| \lambda_i\geq 0, \sum_{i=1}^m \lambda_i=1\biggr\}.
$$
Equivalently, a convex polytope is a bounded solution set of a finite system of linear inequalities:
$$
P(A,b)=\bigl\{x\in\mathbb R^n \bigm| a_i^Tx\leq b_i\text{ for all }1\leq i\leq m\bigr\},
$$
where $A$ is a real $m\times n$ matrix with rows $a_i^T$, and $b\in\mathbb R^m$ with entries $b_i$. Here boundedness means that there is a constant $N$ such that
$\|x\| < N$ holds for all $x \in P(A,b)$.

A Minkowski sum of two sets of $A$ and $B$ in  $\mathbb{R}^n$ is formed by adding each vector in $A$ to each vector in $B$.

\begin{fact} The intersection of finitely many convex polytopes is a convex polytope. And, a cut-off of a convex polytope by a hyperplane is also a convex polytope.
\end{fact}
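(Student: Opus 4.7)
The plan is to work throughout with the H-representation of a convex polytope (bounded solution set of finitely many linear inequalities), which the paper has already stated to be equivalent to the V-representation (convex hull of finitely many points). With this representation both assertions become transparent.

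For the first statement, let $P_{1},\dots,P_{k}$ be convex polytopes in $\mathbb{R}^{n}$, where $P_{j}=\{x\in\mathbb{R}^{n}\mid A_{j}x\le b_{j}\}$ with $A_{j}$ an $m_{j}\times n$ matrix and $b_{j}\in\mathbb{R}^{m_{j}}$. Stacking these systems, the intersection
$$
\bigcap_{j=1}^{k} P_{j}=\bigl\{x\in\mathbb{R}^{n}\bigm| A_{j}x\le b_{j}\text{ for all }1\le j\le k\bigr\}
$$
is again the solution set of a finite system of linear inequalities. Moreover, since each $P_{j}$ is bounded (say $\|x\|<N_{j}$ on $P_{j}$), the intersection is contained in $P_{1}$ and hence also bounded, so it fits the H-representation of a convex polytope.

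For the second statement, a hyperplane $H$ in $\mathbb{R}^{n}$ has the form $H=\{x\in\mathbb{R}^{n}\mid c^{T}x=d\}$ for some nonzero $c\in\mathbb{R}^{n}$ and $d\in\mathbb{R}$, and a ``cut-off'' of a convex polytope $P$ by $H$ is the intersection of $P$ with one of the two closed half-spaces $H^{+}=\{x\mid c^{T}x\le d\}$ or $H^{-}=\{x\mid c^{T}x\ge d\}$ (equivalently $-c^{T}x\le -d$). Writing $P=\{x\mid Ax\le b\}$, the cut-off is described by adjoining one more linear inequality to the defining system of $P$, so it is once more the solution set of a finite system of linear inequalities; boundedness is inherited automatically from $P$.

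There is no real obstacle: the only substantive ingredient is the equivalence of the two definitions of convex polytope, which the paper invokes as background (it is the classical Minkowski--Weyl theorem, see \cite[Chapter~16]{GO04}). Granting that equivalence, both parts reduce to appending inequalities and observing that boundedness is preserved under passing to subsets.
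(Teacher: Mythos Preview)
Your proof is correct and follows exactly the approach the paper intends: the paper simply says ``This follows from the definition of the convex polytope,'' and you have spelled out precisely that argument via the $H$-representation. Your version is a faithful (and more detailed) elaboration of the same one-line justification.
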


This follows from the definition of the convex polytope.

\begin{fact}
  If $S$ is a convex set then  $\mu S+\lambda S$ (in the Minkowski sum sense) is also a convex set for all positive integers $\mu$ and $\lambda$. Moreover, $\mu S+\lambda S=(\mu +\lambda)S$.
\end{fact}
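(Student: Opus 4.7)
The plan is to prove the identity $\mu S+\lambda S=(\mu+\lambda)S$ first, and then deduce the convexity of $\mu S+\lambda S$ as an immediate consequence. Throughout, I interpret $\mu S$ as the scalar dilation $\{\mu s : s\in S\}$.

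First I would establish the easy inclusion $(\mu+\lambda)S\subseteq\mu S+\lambda S$. For any $s\in S$ we can simply write $(\mu+\lambda)s=\mu s+\lambda s$, and this expresses an arbitrary element of $(\mu+\lambda)S$ as a Minkowski sum of an element of $\mu S$ and an element of $\lambda S$. No convexity is needed for this direction, and the argument works for arbitrary real $\mu,\lambda\ge 0$.

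The reverse inclusion $\mu S+\lambda S\subseteq(\mu+\lambda)S$ is where convexity enters and will be the only substantive step. Take an arbitrary element of $\mu S+\lambda S$, which has the form $\mu x+\lambda y$ with $x,y\in S$. Rewrite
$$
\mu x+\lambda y=(\mu+\lambda)\left(\tfrac{\mu}{\mu+\lambda}\,x+\tfrac{\lambda}{\mu+\lambda}\,y\right).
$$
The coefficients $\tfrac{\mu}{\mu+\lambda}$ and $\tfrac{\lambda}{\mu+\lambda}$ are nonnegative and sum to $1$, so the point in parentheses is a convex combination of $x,y\in S$ and hence lies in $S$ by convexity. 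Therefore $\mu x+\lambda y\in(\mu+\lambda)S$, completing the identity. The main (and only) obstacle is recognizing that this algebraic rescaling trick converts the Minkowski sum into a convex combination; once the factor of $\mu+\lambda$ is pulled out, convexity finishes the job.

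Finally, to verify that $\mu S+\lambda S$ is convex, observe that the identity just proved rewrites it as $(\mu+\lambda)S$, which is a scalar dilation of a convex set and hence obviously convex (if $p,q\in(\mu+\lambda)S$ write $p=(\mu+\lambda)s_1$, $q=(\mu+\lambda)s_2$ with $s_i\in S$; then for $t\in[0,1]$, $tp+(1-t)q=(\mu+\lambda)(ts_1+(1-t)s_2)\in(\mu+\lambda)S$). Alternatively, one can argue directly that a Minkowski sum of convex sets is convex without appealing to the identity; either route gives the result in a line.
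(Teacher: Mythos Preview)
Your argument is correct and is exactly the standard verification from the definitions that the paper has in mind; the paper itself gives no detailed proof, merely noting that the fact follows from the definitions of convex set and Minkowski sum. Your rescaling to produce a convex combination is precisely that unpacking, so there is nothing to add.
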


This follows from the definitions of the convex set and the Minkowski sum.

\begin{fact}
  Let $B_2 \subset B_1$ be two balls with the same center. Then there exists a convex polytope $D$ such that $B_2\subseteq D \subseteq B_1$.
\end{fact}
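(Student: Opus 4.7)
The plan is to sandwich the polytope between the two balls using tangent hyperplanes to the inner sphere. After translating, I may assume both balls are centered at the origin; let $r_2 < r_1$ denote the radii of $B_2$ and $B_1$ respectively. For every point $p$ on the sphere $\partial B_2$ (so $\|p\|=r_2$), the tangent hyperplane at $p$ is $\{x\in\mathbb{R}^n : x\cdot p = r_2^2\}$, and the corresponding closed half-space
$$
H_p := \bigl\{\,x\in\mathbb{R}^n : x\cdot p \le r_2^2\,\bigr\}
$$
contains all of $B_2$. Hence any intersection of such half-spaces over a finite subset of $\partial B_2$ automatically contains $B_2$; the task is to choose finitely many such points so that the intersection is bounded and also fits inside $B_1$.

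Concretely, pick $\varepsilon > 0$ with $\varepsilon < r_2(r_1-r_2)/r_1$. By compactness of $\partial B_2$, there is a finite $\varepsilon$-net $\{p_1,\dots,p_m\}\subset \partial B_2$, meaning every $q\in\partial B_2$ satisfies $\|q-p_i\|<\varepsilon$ for some $i$. I set
$$
D := \bigcap_{i=1}^m \bigl\{\,x\in\mathbb{R}^n : x\cdot p_i \le r_2^2\,\bigr\}.
$$
Then $B_2\subseteq D$ by the previous paragraph, and $D$ is a solution set of finitely many linear inequalities, so once I show $D$ is bounded it will be a convex polytope.

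Both boundedness and the inclusion $D\subseteq B_1$ follow from one short estimate. Take any $x\in D$ with $\|x\|>r_2$, and project it to $\partial B_2$ by setting $q := r_2 x/\|x\|$, so $x\cdot q = r_2\|x\|$. Choosing $p_i$ in the net with $\|q-p_i\|<\varepsilon$, the Cauchy--Schwarz bound gives
$$
r_2\|x\| \;=\; x\cdot q \;=\; x\cdot p_i + x\cdot (q-p_i) \;\le\; r_2^2 + \varepsilon\|x\|,
$$
whence $\|x\|\le r_2^2/(r_2-\varepsilon)$; the choice of $\varepsilon$ makes the right-hand side at most $r_1$. Therefore $D$ is bounded (a genuine polytope) and $D\subseteq B_1$, completing the construction. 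There is no real obstacle here; the only point demanding a moment of care is to pick $\varepsilon$ small enough so that $r_2-\varepsilon$ stays positive and the resulting bound on $\|x\|$ does not overshoot $r_1$, which is exactly what the inequality $\varepsilon < r_2(r_1-r_2)/r_1$ secures.
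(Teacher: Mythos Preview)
Your argument is correct: the $\varepsilon$-net of tangent half-spaces gives an explicit polytope, and the Cauchy--Schwarz estimate cleanly yields both boundedness and $D\subseteq B_1$ once $\varepsilon<r_2(r_1-r_2)/r_1$ (which also forces $r_2-\varepsilon>0$, so the division is legitimate). The paper, by contrast, does not prove this fact directly at all; it simply appeals to the general approximation theorem that any convex body can be approximated arbitrarily closely by convex polytopes (Schneider, \emph{Convex Bodies}, Theorem~1.8.19), from which the sandwich $B_2\subseteq D\subseteq B_1$ follows immediately. Your route is more elementary and fully self-contained, requiring only compactness of the sphere and Cauchy--Schwarz, whereas the paper's citation buys brevity at the cost of invoking a much heavier result. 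Either way the fact is standard; your construction has the added virtue of making the dependence of the polytope on the gap $r_1-r_2$ completely explicit.
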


This fact can be deduced, for example, from \cite[Theorem 1.8.19]{Schneider}  which asserts that a convex body can be approximated arbitrarily closely by convex polytopes.

The following is the key fact to be used in the proof of Theorem~\ref{Main}. The idea of the proof was suggested to us by Fedor Nazarov.

\begin{theorem}\label{N}
Let $P$ be  a convex polytope. Then there is an integer
$\mu$  and convex polytopes $K_1, \ldots, K_{\mu}$ so that:
\begin{enumerate}
\item $P$ is a union of $K_1, \ldots, K_{\mu}$.
\item Each of $K_i$ can be bounded by a ball of radius $1$.
\item For each positive integer $\nu$, the union $\cup_{i=\nu}^{\mu} K_i$ is a convex polytope.
\end{enumerate}
\end{theorem}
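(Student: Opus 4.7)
The plan is to build the decomposition by iterated hyperplane cuts. Set $Q_1 = P$, and at step $\nu$ pick a hyperplane $H_\nu$, setting $K_\nu = Q_\nu \cap H_\nu^+$ and $Q_{\nu+1} = Q_\nu \cap H_\nu^-$; stop when $Q_\mu$ itself fits in a unit ball and put $K_\mu = Q_\mu$. Under this scheme, properties (1) and (3) hold automatically: each $K_\nu$ is a hyperplane cap of a convex polytope, hence a convex polytope, and the tail $\bigcup_{i\ge\nu} K_i = Q_\nu = P \cap \bigcap_{i<\nu} H_i^-$ is a convex polytope. The content of the theorem is therefore to choose the hyperplanes so that every cap lies in a unit ball and the procedure terminates in finitely many steps.

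A legal cut is always available at any single step: at any vertex $v$ of $Q_\nu$, the hyperplane $\{\ell = \ell(v) - \eta\}$ for a linear functional $\ell$ uniquely maximized on $Q_\nu$ at $v$ and $\eta > 0$ sufficiently small produces a cap that shrinks to $\{v\}$ as $\eta \to 0$, so it fits in a unit ball for small $\eta$. For termination, my plan is to prescribe the admissible cut normals in advance via Fact 3. Fix an interior point $c_0 \in P$; applied to the concentric balls of radii $\tfrac12$ and $1$ around $c_0$, Fact 3 yields a convex polytope $D$ with $B(c_0,\tfrac12) \subseteq D \subseteq B(c_0,1)$, and we take the outward facet normals $u_1,\dots,u_N$ of $D$ as the only admissible cut directions. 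A cut in direction $u_j$ strictly decreases the support value $h_{Q_\nu}(u_j) := \max_{x \in Q_\nu} u_j\cdot x$ without changing $h_{Q_\nu}(u_k)$ for $k \ne j$; once $h_{Q_\mu}(u_j) \le h_D(u_j) + u_j\cdot c_0$ holds for every $j$, the halfspaces defining $D+c_0$ all contain $Q_\mu$, whence $Q_\mu \subseteq D+c_0 \subseteq B(c_0,1)$ and we can stop.

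The main obstacle will be satisfying both restrictions simultaneously: using only the $N$ prescribed normals, yet keeping each cap inside a unit ball. A $u_j$-normal cut near the vertex $v$ of $Q_\nu$ that uniquely maximizes $u_j\cdot x$ gives a cap whose width perpendicular to $u_j$ is controlled by how narrowly $Q_\nu$ tapers at $v$; by choosing the $u_j$'s in general position (using the flexibility in selecting $D$ afforded by Fact 3), one can force each $u_j$-extremal face of every intermediate $Q_\nu$ to be a single vertex, so each cap can be made arbitrarily small by cutting sufficiently close. Finiteness then follows from a monovariant, for example the lexicographically ordered tuple $\bigl(h_{Q_\nu}(u_j)\bigr)_{j=1}^N$, which strictly decreases at each step and takes only finitely many values (its coordinates must lie in the finite collection of scalars $u_j \cdot w$ for $w$ a vertex appearing during the procedure).
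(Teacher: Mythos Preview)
Your framework---iterated hyperplane cuts with $K_\nu = Q_\nu \cap H_\nu^+$, $Q_{\nu+1} = Q_\nu \cap H_\nu^-$, so that each tail $\bigcup_{i\ge\nu} K_i = Q_\nu$ is automatically a convex polytope---is correct and is exactly the skeleton the paper uses. The gap is entirely in your last paragraph, where you try to reconcile ``prescribed normals'' with ``small caps'' and prove termination.

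The general-position claim is false. The moment you cut with normal $u_j$, the resulting $Q_{\nu+1}$ acquires $u_j$ itself as an outer facet normal, so its $u_j$-extremal face is that entire facet, not a vertex; no genericity of $\{u_1,\dots,u_N\}$ relative to the facets of $P$ can prevent this, because the offending facet is one you have just created. Any further cut in direction $u_j$ then produces a slab at least as wide as that facet, with no a~priori bound. (Try $P=[-M,M]^2$: a generic $u_j$ is maximised at a corner and the first cap is tiny, but once you have moved a distance of order~$1$ from that corner, the $u_j$-facet has length of order $M$ and subsequent $u_j$-caps no longer fit in a unit ball.) If instead you use each $u_j$ only once, the single cut must bring $h_{Q_\nu}(u_j)$ all the way down to $h_D(u_j)$, and that cap is typically comparable in size to $P$ itself. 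You give no interleaving scheme controlling both effects. The termination argument is also circular: the set $\{u_j\cdot w : w\text{ a vertex appearing during the procedure}\}$ is finite only if the procedure already terminates, and lexicographic decrease of a real-valued tuple does not by itself give finiteness. (Incidentally, a $u_j$-cut can also lower $h_{Q_\nu}(u_k)$ for $k\neq j$, so ``without changing'' is inaccurate, though this alone would not be fatal.)

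The paper avoids all of this by abandoning the fixed-normal idea and working in concentric spherical shells. Let $O$ be the centre and $R$ the radius of the minimal bounding ball of $P$, and fix $\gamma>0$ once so that any cap of a ball of radius $\le R$ cut off by a hyperplane within distance $2\gamma R$ of its boundary fits in a unit ball. At round $i$ one chooses a \emph{fresh} polytope $D_i$ sandwiched between the spheres of radii $(1-(i{+}1)\gamma)R$ and $(1-i\gamma)R$ about $O$ (Fact~3) and cuts with all of $D_i$'s facet hyperplanes in succession. Every such hyperplane lies within $2\gamma R$ of the current bounding sphere, so each cap is small by the shell geometry alone---no vertex argument is needed and the face structure of the intermediate polytopes is irrelevant. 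After the round, the remainder $P_i = P_{i-1}\cap D_i$ sits inside the ball of radius $(1-i\gamma)R$, so the process ends after finitely many rounds, when $(1-s\gamma)R<1$.
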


\begin{proof}
Let $B_1$ be the smallest bounding ball for $P$. Let $R$ be the radius of $B_1$ and $O$ its center. If $R=1$, we have nothing to prove, so assume that $R>1$. We will choose a ball $B_3$ with the same center $O$ which is sufficiently close to $B_1$. More precisely, if $D$ is a convex polytope such that $B_3\subseteq D \subseteq B_1$ then the cut-off of $B_1$ (the part not containing $B_3$) by any of its facet-defining hyperplanes can be bounded by a ball of radius $1$.
 This can be done in the following way. First, pick a point $A$ on $B_1$ and make a ball $B_0$ of radius $\frac12$ centered at $A$. The
intersection of the surfaces of $B_1$ and $B_0$, which are $(n-1)$-spheres, will be an $(n-2)$-sphere and is contained in some hyperplane $H$.
We then pick the ball $B_3$ centered at $O$ having $H$ as a tangent hyperplane. Write the radius of $B_3$ as $(1-2\gamma)R$ for some $\gamma>0$.

Now let $B_2$ be the ball centered at $O$ of radius $(1-\gamma)R$.  Let $D_1$ be a convex polytope such that $B_3\subseteq D_1 \subseteq B_2$. Order the facets of $D_1$ as $F_{1,1},\dots,F_{1,r_1}$.
For each facet $F_{1,i}$, the facet-defining hyperplane $H_{1,i}$ cuts off a convex polytope $K_{1,i}$ from $P$ which does not contain~$B_3$ in a sequel: First, $H_{1,1}$ cuts off the piece $K_{1,1}$ from $P$, then $H_{1,2}$ cuts off the piece $K_{1,2}$ from the remaining, and so on. Let $P_1=P\cap D_1$ which is again a convex polytope.
By our construction, we have that each $K_{1,i}$ is bounded by a ball of radius $1$, $P=(\cup_{i=1}^{r_1} K_{1,i}) \cup P_1$, and for any integer $t$, the union
$(\cup_{i=t}^{r_1} K_{1,i}) \cup P_1$ is a convex polytope.

Note that $P_1$ is bounded by the ball $B_2$ of radius $(1-\gamma)R<R$.  If $(1-\gamma)R<1$, then we are done. Assume this is not the case, and let $B_4$ be the ball centered at $O$ of radius $(1-3\gamma)R$, and let $D_2$ be a convex polytope such that $B_4\subseteq D_2\subseteq B_3$.
Order the facets of $D_2$ as $F_{2,1},\dots,F_{2,r_2}$.
For each facet $F_{2,i}$, the facet-defining hyperplane $H_{2,i}$ cuts off a convex polytope $K_{2,i}$ from $P_1$ which does not contain~$B_4$ in a sequel: First, $H_{2,1}$ cuts off the piece $K_{2,1}$ from $P_1$, then $H_{2,2}$ cuts off the piece $K_{2,2}$ from the remaining, and so on. Let $P_2=P_1\cap D_2$ which is again a convex polytope.
By our construction, we have that each $K_{2,i}$ is bounded by a ball of radius $1$, $P_1=(\cup_{i=1}^{r_2} K_{2,i}) \cup P_2$, and for any integer $t$, the union
$(\cup_{i=t}^{r_2} K_{2,i}) \cup P_2$ is a convex polytope.

We continue in this fashion until we have $(1-s\gamma)R<1$ for some $s\geq0$. Then we rename $K_{1,1}$ as $K_1$, $K_{1,2}$ as $K_2$, $\dots$, $K_{s,r_s}$ as $K_{\mu-1}$, and $P_s$ as $K_\mu$. These $K_i$, $i=1,\dots,\mu$, will fulfill the required conditions.
 \end{proof}

\section{Proof of the Main Theorem}
Let $R$ be a prime ring with extended centroid $C$ and let $c_1,\ldots,c_n$ be elements of $C$.
For any multi-index $\alpha = (\alpha_1,\ldots, \alpha_n)$, where each $\alpha_i$ is a non-negative integer, let
$$c^{\alpha }=\prod _{i=1}^{n}c_{i}^{\alpha _{i}}=c_{1}^{\alpha _{1}}\cdots c_{n}^{\alpha _{n}}.$$
Like in case of polynomial rings in several variables, let us call $c^{\alpha}$ a monomial element and
define its (total) degree $|\alpha|$ as $|\alpha |=\sum _{i=1}^{n}\alpha _{i}$.
An element $p\in R[c_1,\ldots,c_n]$ can be represented as a polynomial expression
\begin{equation}\label{E1}
 p=\sum _{\alpha }a_{\alpha }c^{\alpha },
\end{equation}
where $a_{\alpha }=a_{\alpha _{1},\ldots ,\alpha _{n}}\in {R}$.

Slightly modifying ideas of \cite[p.\ 2474]{PS98} we define the degree $\deg(p)$ of a nonzero element $p$ represented by formula (\ref{E1}) as the largest degree of a monomial occurring with nonzero coefficient in the expression of $p$, the minimal degree $\min(p)$ as the smallest degree of a monomial occurring with nonzero coefficient and the length $l(p)=\deg(p)-\min(p)+1$.

Let us note that the same element may have many different polynomial representations. For an ideal $I$ of $R$, we write $p\in I[c_1,\ldots,c_n]$ to mean that we are looking at the polynomial representation of $p$ with coefficients in the ideal $I$ of $R$.

Our first lemma is inspired by \cite[Lemma 1]{PS98}. We basically use the same argument presented there with just a slight modification.

\begin{lemma}\label{L1}
Let $R$ be a prime ring with extended centroid $C$, and let $c_{1},\ldots,c_{n}\in C$, $n\ge 1$. Suppose that
$S=R[c_{1},\ldots,c_{n}]$ is a simple ring with $1$. Let $w\in  R[c_1,\ldots,c_n]$ be a polynomial representation of $1$
such that $\min(w)\ge 1$. Then for any nonzero ideal $I$ of $R$ there exists a polynomial representation
$t\in  I[c_1,\ldots,c_n]$ of $1$ with $\min(t)\ge \deg(w)$ and $l(t)\le \deg(w)$.
\end{lemma}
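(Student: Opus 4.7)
The plan is to construct $t$ in two steps: first extract a polynomial representation of $1$ with coefficients in $I$ from the ideal structure of $S$, and then exploit the relation $1=w$ in $S$ to slide its support up into a window of consecutive degrees of width at most $d:=\deg(w)$.

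Since $R$ is prime and $I\ne 0$, the two-sided ideal $SIS$ of $S$ is nonzero, hence equals $S$ by simplicity. Writing $1=\sum_k p_k i_k q_k$ with $p_k,q_k\in S$ and $i_k\in I$, and expanding each $p_k,q_k$ as a polynomial in the $c_i$'s with coefficients in $R$, the inclusion $RIR\subseteq I$ produces a polynomial representation $u=\sum_\beta b_\beta c^\beta\in I[c_1,\ldots,c_n]$ of $1$, of some degree $L$. For each $M\ge 0$ let $V_M\subseteq S$ denote the set of elements of $S$ that admit a homogeneous polynomial representation of degree $M$ with coefficients in $I$. The decisive observation is that for any $v\in V_M$, the identity $v=v\cdot w$ in $S$ realizes $v$ as the element represented by the polynomial $v\cdot w$, whose coefficients remain in $I$ (since $IR\subseteq I$) and whose monomials all have degrees in $[M+1,M+d]$; grouping by degree gives the shift containment $V_M\subseteq V_{M+1}+V_{M+2}+\cdots+V_{M+d}$.

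The main subtlety is that a naive iteration of this shift would widen the support window beyond $d$; the fix is to substitute only the lowest homogeneous summand of the current expression. More precisely, if $V_M\subseteq V_N+V_{N+1}+\cdots+V_{N+d-1}$, then applying the shift to the $V_N$-summand alone produces new terms of degrees in $[N+1,N+d]$, which together with the unchanged $V_{N+1},\ldots,V_{N+d-1}$ still fit in a window of width $d$. An induction on $N$ then yields $V_M\subseteq V_N+V_{N+1}+\cdots+V_{N+d-1}$ for every $N\ge M+1$. To finish, decompose $u=u_0+u_1+\cdots+u_L$ into homogeneous parts, so that $u_j\in V_j$, and set $N:=\max(d,L+1)$. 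Applying the sliding-window containment to each $u_j$ produces a polynomial representation $\tilde u_j\in I[c_1,\ldots,c_n]$ of $u_j$ supported in degrees $[N,N+d-1]$; summing gives the required $t=\sum_j \tilde u_j$, a polynomial representation of $1$ with coefficients in $I$, $\min(t)\ge N\ge d=\deg(w)$, and $l(t)\le d$.
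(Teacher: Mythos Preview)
Your proof is correct, and the core engine---replace the lowest homogeneous piece of a representation of $1$ by its product with $w$, thereby sliding the support up by at least one while never widening it beyond $d=\deg(w)$---is exactly the paper's $*$-operation $s^*=ws_\gamma+s_{\gamma+1}+\cdots+s_\delta$. Your packaging via the sets $V_M$ and the sliding-window induction $V_M\subseteq V_N+\cdots+V_{N+d-1}$ is a clean reformulation of the same iteration.

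There is one genuine economy in your route worth flagging. The paper first invokes the extended-centroid fact that some nonzero ideal $J$ satisfies $c_1^{-d}J\subseteq R$ in order to manufacture a representation of $1$ with $\min\ge d$ before starting the $*$-iteration; only then does it shrink the length. You skip this preliminary step entirely: your induction simultaneously pushes $\min$ up and caps the length at $d$, so the only structural inputs you use are simplicity of $S$ (to land in $I[c_1,\ldots,c_n]$) and the ideal property $IR\subseteq I$. This makes the argument slightly more self-contained, at no cost.
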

\begin{proof}
Denote $d=\deg(w)$. Let $J$ be a nonzero ideal of $R$ such that $c_1^{-d}J\subseteq R$. Since $S$ is a simple ring and $J[c_{1},\ldots,c_{n}]$ a nonzero ideal of $S$, we have $S=J[c_{1},\ldots,c_{n}]$.
Let $p=p_k+p_{k+1}+\ldots+p_l \in J[c_1,\ldots,c_n]$ be a polynomial representation of $1$ where $p_i$ is a sum of monomials of degree $i$, with $0\le k\le i \le l$.  Using the definition of $J$, every coefficient $j\in J$ in the expression of $p$ can be replaced
by $c_1^dr$, for some $r\in R$, so there exists a polynomial representation $q=q_{\eta}+\ldots+q_{\zeta} \in R[c_1,\ldots,c_n]$ of $1$  where
$q_i$ is a sum of monomials of degree $i$, with $d\le \eta\le i \le \zeta$. For any polynomial representation of $1$ with
coefficients in a nonzero ideal $I$ we can multiply it by $q$, so we can assume that $s=s_{\gamma}+\ldots+s_{\delta} \in I[c_1,\ldots,c_n]$ is a polynomial representation of~$1$  where
$s_i$ is a sum of monomials of degree $i$, with $d\le \gamma\le i \le \delta$.
Assume that $s_{\gamma}$ is not zero. Let $s^*=w s_\gamma+s_{\gamma+1}+\ldots+s_{\delta}$.
We get that $s^* \in  I[c_1,\ldots,c_n]$ is a polynomial representation of~$1$. Since $\min(w)\ge 1$, we have
$\min (s^*)\ge \min (s)+\nobreak 1$.
If $\deg (ws_{\gamma})>\delta$,
then $\deg(w)+\deg(s_\gamma)=\deg(w)+\gamma>\delta$, and so $l(s)=\delta-\gamma+1\le \deg(w)$. In this case, we are done as we can use $s$ as the required polynomial representation of $1$.
So assume that $\deg (ws_{\gamma})\le \delta$,  and then $l(s^*)\le l(s)-1$.   Repeating the $*$-operation enough
many times we arrive at the polynomial representation of $1$ denoted by  $t\in  I[c_1,\ldots,c_n]$ with $\min(t)\ge \deg(w)$ and $l(t)\le \deg(w)$.
\end{proof}

\begin{lemma}\label{L2}
Let $R$ be a prime ring with extended centroid $C$, and let $c_{1},\ldots,c_{n}\in C$, $n\ge 1$. Suppose that
$S=R[c_{1},\ldots,c_{n}]$ is a simple ring with $1$. Then there exist a positive integer $m$ and $a_{i_1,\ldots,i_n}\in R$,
with $i_j\ge 0$, $i_1+\ldots+i_n=m$,
such that
$$
\sum_{i_1+\ldots+i_n=m} a_{i_1,\ldots,i_n} c_1^{i_1}\cdots c_n^{i_n}=1.
$$
\end{lemma}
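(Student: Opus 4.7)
My plan uses an ideal-theoretic observation about $S$ together with Lemma~\ref{L1}, aiming to compress a polynomial representation of~$1$ to one in which all monomials have the same total degree.

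First, I would verify that for each $m\ge 1$, the set
\[
\mathcal{I}_m=\{s\in S : s\text{ has a polynomial representation with every monomial of total degree }\ge m\}
\]
is a two-sided ideal of~$S$. Closure under addition and left/right multiplication by~$R$ is immediate since the coefficients stay in~$R$, and multiplication by any $c_i$ only raises the degrees of monomials, so $\mathcal{I}_m$ is closed under multiplication by all of~$S$. Provided at least one $c_i\ne 0$ (otherwise $S=R$ and the conclusion cannot hold), this $c_i$ is central and nonzero in the simple ring~$S$, hence invertible, so $c_i^m\in\mathcal{I}_m\setminus\{0\}$; by simplicity $\mathcal{I}_m=S$, and in particular $1\in\mathcal{I}_m$ for every $m\ge 1$.

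Next I would fix a polynomial representation $w$ of~$1$ with $\min(w)\ge 1$ (available from the $m=1$ case above), set $D=\deg(w)$, and apply Lemma~\ref{L1} with $I=R$ to obtain a representation $t\in R[c_1,\dots,c_n]$ of~$1$ satisfying $\min(t)\ge D$ and $l(t)\le D$. Writing $t=t_e+t_{e+1}+\dots+t_{e+l-1}$, where each $t_k$ collects the degree-$k$ monomials of~$t$, this exhibits~$1$ as a sum of at most $D$ ``homogeneous'' summands, each $t_k$ admitting a degree-$k$ homogeneous representation. If $l(t)=1$ we are already done.

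The main obstacle is to collapse the window of degrees down to width~$1$ when $l(t)>1$. My plan is to iterate Lemma~\ref{L1}: use the current~$t$ as the new~$w$, and at each stage take $I$ to be the (nonzero, by primeness of~$R$) two-sided ideal of~$R$ generated by the coefficients of a distinguished homogeneous component of~$t$---for example the bottom-degree part~$t_e$. The hope is that constraining successive representations to have coefficients in this smaller ideal will force the degree-window to shrink. The difficulty is that Lemma~\ref{L1} only bounds $l(t')\le\deg(w)$, not $l(t')<l(w)$, so naive iteration does not reduce the length. Closing this gap is the technical heart of the argument I have not fully worked out; I expect it to require a deeper use of the simplicity of~$S$---for example an argument that a persistent multi-degree decomposition of~$1$ would give rise to orthogonal elements whose existence contradicts simplicity---so as to convert the merely bounded length provided by L1 into a genuinely decreasing sequence that terminates at $l=1$.
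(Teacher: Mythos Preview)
Your proposal has a genuine gap, and you have correctly identified where it lies: after one application of Lemma~\ref{L1} you obtain a representation $t$ with $l(t)\le D$, but you do not see how to reduce to $l(t)=1$, and the iteration scheme you suggest does not shrink the length. The missing idea is not iteration at all, but rather the \emph{choice of the ideal} $I$ fed into Lemma~\ref{L1}. The crucial property of the extended centroid is that for each element $c\in C$ there exists a nonzero ideal $J\lhd R$ with $cJ\subseteq R$. Hence, having fixed $w$ with $d=\deg(w)$, one may choose a nonzero ideal $L\lhd R$ such that $c_1^{-j}L\subseteq R$ for every $j=1,\dots,d$ (e.g.\ take a product of such ideals, nonzero by primeness). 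Applying Lemma~\ref{L1} with this particular $L$ yields $t\in L[c_1,\dots,c_n]$ with $\min(t)\ge d$ and $l(t)\le d$. Now each monomial $a\,c^{\alpha}$ of $t$ has $a\in L$ and $j:=\deg(t)-|\alpha|<l(t)\le d$, so one may rewrite $a=b\,c_1^{j}$ with $b\in R$, turning the monomial into $b\,c_1^{\alpha_1+j}c_2^{\alpha_2}\cdots c_n^{\alpha_n}$ of degree exactly $\deg(t)$. Doing this to every monomial produces a homogeneous representation of~$1$ in a single step.

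In short, the collapse from bounded length to length~$1$ is not achieved by a further structural argument about simplicity or by iterating Lemma~\ref{L1}, but by exploiting the defining property of $C$ to pick coefficients that can absorb the needed powers of $c_1$. Your route to obtaining the initial $w$ with $\min(w)\ge1$ via the ideals $\mathcal{I}_m$ is fine (and in fact parallels the paper's first paragraph), but it bypasses exactly the ``$c_1^{-1}I\subseteq R$'' mechanism that is the key to finishing the proof.
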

\begin{proof}
Let $I$ be a nonzero ideal of $R$ such that $c_1^{-1}I \subseteq R$. Since $S$ is a simple ring and $I[c_{1},\ldots,c_{n}]$ a nonzero ideal of $S$, we have $S=I[c_{1},\ldots,c_{n}]$. Thus, there is a representation of $1$ of the form (\ref{E1}) with coefficients $a_{\alpha}\in I$. Replacing
$a_\alpha$ by $c_1 b_\alpha$ where $b_\alpha \in R$ we get that $1$ can be written as a polynomial expression
$w \in R[c_1,\ldots, c_n]$ with $\min(w)\ge 1$.

Denote $d=\deg(w)$. Let $L$ be a nonzero ideal of $R$ such that $c_1^{-j} L\subseteq R$ for all $j=1,\ldots,d$. According to Lemma~\ref{L1} there exists a polynomial representation $t\in  L[c_1,\ldots,c_n]$ of $1$ with $\min(t)\ge d$ and $l(t)\le d$.

Now, if $a\in L$ and $0\leq j\leq d$, then for some suitable $b_j\in R$, we have $a=b_jc_1^j$.
For a monomial $ac_1^{\alpha_1}c_2^{\alpha_2}\cdots c_n^{\alpha_n}$ of $t$, where $a\in L$, we have $\min(t)\leq |\alpha|\leq\deg(t)$. If $j=\deg(t)-|\alpha|$, then $j< l(t)\le d$, and we can rewrite $ac_1^{\alpha_1}c_2^{\alpha_2}\cdots c_n^{\alpha_n}$ as
$bc_1^{\beta}c_2^{\alpha_2}\cdots c_n^{\alpha_n}$, where $b\in R$ and $\beta=\alpha_1+j$. Therefore, $\beta+\sum_{i=2}^n\alpha_i=\deg(t)$.

This way, we rewrite every monomial of $t$ into a monomial with coefficients in $R$, and degree $\deg(t)$. The resulting polynomial $g$ is an element in $R[c_1,\ldots,c_n]$ with the same $\min(g)$ and $\deg(g)$, i.e., $l(g)=1$, which proves the lemma.
\end{proof}

Replacing the ring $R$ in Lemma~\ref{L2} by any nonzero ideal $I$ of $R$, we immediately get

\begin{cor}\label{C1}
Let $R$ be a prime ring with extended centroid $C$, and let $c_{1},\ldots,c_{n}\in C$, $n\ge 1$. Suppose that
$S=R[c_{1},\ldots,c_{n}]$ is a simple ring with $1$ and $I$ a nonzero ideal of $R$.
Then there exist a positive integer $m$ and $a_{i_1,\ldots,i_n}\in I$,
with $i_j\ge 0$, $i_1+\ldots+i_n=m$,
such that
$$
\sum_{i_1+\ldots+i_n=m} a_{i_1,\ldots,i_n} c_1^{i_1}\cdots c_n^{i_n}=1.
$$
\end{cor}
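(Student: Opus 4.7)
The plan is to apply Lemma~\ref{L2} verbatim, but with the prime ring $R$ replaced by the nonzero ideal $I$ (keeping the same $c_1,\ldots,c_n$). The conclusion so obtained is word-for-word the assertion of Corollary~\ref{C1}, so the whole proof reduces to checking that the hypotheses of Lemma~\ref{L2} remain valid after this substitution.

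Three items need to be verified. First, $I$ is itself a prime ring, which is immediate from the standard fact that a nonzero ideal of a prime ring inherits primeness. Second, the extended centroid of $I$ coincides with the extended centroid $C$ of $R$; this is a classical feature of extended centroids of prime rings (see, e.g., \cite[p.~68]{BMM96}), so $c_1,\ldots,c_n$ still lie in the extended centroid of $I$ and the ring $I[c_1,\ldots,c_n]$ makes sense inside the central closure. Third, and this is the only point that requires a moment's thought, $I[c_1,\ldots,c_n]$ must be a simple ring with $1$.

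For the third point I would argue as follows. Because each $c_j$ is central, the subset $I[c_1,\ldots,c_n]$ is closed under left and right multiplication by arbitrary elements of $S=R[c_1,\ldots,c_n]$: for any $s=\sum r_\beta c^\beta \in S$ and any monomial $a c^\alpha$ with $a\in I$, one has $s\cdot(ac^\alpha)=\sum(r_\beta a)c^{\beta+\alpha}$, and $r_\beta a\in RI\subseteq I$; the symmetric computation handles right multiplication. Hence $I[c_1,\ldots,c_n]$ is a nonzero two-sided ideal of $S$, and the simplicity of $S$ forces $I[c_1,\ldots,c_n]=S$, a simple ring with identity.

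With the three hypotheses in place, Lemma~\ref{L2} applied to $I$ (in place of $R$) and to the same tuple $c_1,\ldots,c_n$ yields a positive integer $m$ and elements $a_{i_1,\ldots,i_n}\in I$, with $i_j\ge 0$ and $i_1+\cdots+i_n=m$, satisfying
$$
\sum_{i_1+\cdots+i_n=m} a_{i_1,\ldots,i_n}\, c_1^{i_1}\cdots c_n^{i_n}=1,
$$
which is exactly the conclusion of Corollary~\ref{C1}. The only non-cosmetic step is the ideal-closure verification in the third item, and it is transparent from the centrality of the $c_j$; so no real obstacle arises.
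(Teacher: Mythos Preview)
Your proposal is correct and follows precisely the paper's own approach: the paper's proof is the single sentence ``Replacing the ring $R$ in Lemma~\ref{L2} by any nonzero ideal $I$ of $R$, we immediately get'' Corollary~\ref{C1}, and you have simply supplied the routine verifications (primeness of $I$, agreement of extended centroids, and $I[c_1,\ldots,c_n]=S$) that justify this replacement.
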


{\bf Proof of Theorem~\ref{Main}.}
Assume that the simple ring $S$ has a nonzero center, so that $S$ contains an identity element, and we will derive a contradiction.

Let $I$ be a nonzero ideal of $R$ such that $c_ic_j^{-1} I \subseteq R$ for all $i,j=1,\ldots, n$. Observe that $I$ is chosen in such a way that
\begin{align*}
  \hbox{for any } a\in I, ac_i = bc_j \hbox{ for some }b\in R.
\end{align*}
Thus, if $a\in I$, then for some $b\in R$
\begin{align}\label{eq:change}
a c_1^{\alpha_1}\cdots c_i^{\alpha_i+1}\cdots c_j^{\alpha_j-1}\cdots c_n^{\alpha_n}=bc_1^{\alpha_1}\cdots c_i^{\alpha_i}\cdots c_j^{\alpha_j}\cdots c_n^{\alpha_n}.
\end{align}

Let $J_0=I^{2n}$ and $L_0=J_0 I$. These two parts of $L_0$ (and those of $L_i$ later) will play different but important roles in the proof.

By Corollary~\ref{C1} for the nonzero ideal $L_0$, there exist a positive integer $m$ and $a_{i_1,\ldots,i_n}\in L_0$, with $i_j\ge 0$, $i_1+\ldots+i_n=m$,
such that
$$
p=\sum_{i_1+\ldots+i_n=m} a_{i_1,\ldots,i_n} c_1^{i_1}\cdots c_n^{i_n}=1.
$$

Let $P$ be a convex hull of the set
$$\{(i_1,\ldots,i_n)\mid i_j\geq0, 1\leq j\leq n, \hbox{ and }i_1+\dots+i_n=m\}.$$
By Theorem~\ref{N}, $P$ is a union of convex polytopes $K_1, \ldots, K_{\mu}$ such that
each of $K_i$ can be bounded by a ball of radius $1$, and for each positive integer $\nu$, the union $\cup_{i=\nu}^{\mu} K_i$ is a convex polytope.

We point out that $K_i$ can be  bounded by a ball of radius $1$ means that the (Euclidean) distance between two exponential vectors $(i_1,\ldots,i_n)$ and $(j_1,\ldots,j_n)$ (of $c_1^{i_1}\cdots c_n^{i_n}$ and $c_1^{j_1}\cdots c_n^{j_n}$ respectively) which belong to the same $K_i$ is at most~$2$. For any positive integer $\ell$, any two exponential vectors $(i_1,\ldots,i_n)$ and $(j_1,\ldots,j_n)$ in $\ell K_i$ would have distance at most $2\ell$, and we have
$$|i_1-j_1|+\dots+|i_n-j_n|\le \sqrt{n}\left[(i_1-j_1)^2+\dots+(i_n-j_n)^2\right]^{\frac{1}{2}}\le \sqrt{n} (2\ell)\le 2n\ell.$$
Since $i_1+\dots+i_n=j_1+\dots+j_n$, using (\ref{eq:change}), we see that if $a=a_1a_2\cdots a_{2n\ell}$, where each $a_i\in I$, then there is some $b\in R$ such that $ac_1^{i_1}\cdots c_n^{i_n}=bc_1^{j_1}\cdots c_n^{j_n}$. Consequently, we have the following statement.
\begin{equation}\label{eq:change-general}
\lower 1pc\hbox to .8\linewidth{\vbox{\rightskip=.2\linewidth\noindent
\textit{For $(i_1,\ldots,i_n),(j_1,\ldots,j_n)\in\ell K_i$, $\ell\geq1$, $\epsilon\geq0$, and $a\in I^{2n\ell+\epsilon}$, there is some $b\in R$ such that $ac_1^{i_1}\cdots c_n^{i_n}=bc_1^{j_1}\cdots c_n^{j_n}$.}}}
\end{equation}

Let $w=\sum_{\alpha\in K_1}a_\alpha c^\alpha$ be the sum of the monomials of $p$ having the exponential vectors in $K_1$. We claim that $w$ is nilpotent. Pick an arbitrary monomial $a_{\alpha_0} c^{\alpha_0}$ from $w$, $\alpha_0\in K_1$. We have $a_{\alpha}\in L_0=J_0I=I^{2n+1}$ for all $\alpha\in K_1$. If $\alpha'\in K_1$, then there is some $b'\in R$, depending on $\alpha'$, such that $a_{\alpha'}c^{\alpha'}=b'c^{\alpha_0}$ as we have seen in (\ref{eq:change-general}). In this way, we see that $w=(a_{\alpha_0}+\sum_{\alpha'\in K_1\setminus\{\alpha_0\}}b')c^{\alpha_0}$. Since $a_{\alpha_0}+\sum_{\alpha'\in K_1\setminus\{\alpha_0\}}b'\in R$ is nilpotent, we conclude that $w=\sum_{\alpha\in K_1}a_\alpha c^\alpha$ is indeed nilpotent.

Now we write $p=w+q$, where the exponential vectors of the monomials of $q$ all fall into $P\setminus K_1\subseteq P_1=\cup_{i=2}^{\mu} K_i$, and $P_1$ is a convex polytope.

Due to the Minkowski sum property, for any positive integer $\ell$ the monomials of $q^\ell$ will have the following property: the convex hull of their exponential vectors is contained in $\ell P_1$.

If $w$ is zero, we would simply proceed to the next iteration. Thus, without loss of generality, we assume that $w$ is nonzero. Let $k_1$ be a positive integer such that $w^{k_1}=0$. Let $l_1=mk_1$, and set
$$
p_1=p^{l_1}=(q+w)^{l_1}=
q^{l_1}+\binom{l_1}{1}q^{l_1-1}w+\ldots+\binom{l_1}{k_1-1}q^{l_1-k_1+1}w^{k_1-1}.
$$
All monomials in $p_1$ have coefficients in $L_0^{l_1}= J_0^{l_1}I^{l_1}$. We put $J_1=J_0^{l_1}$, and $L_1=J_1I$.

Using (\ref{eq:change-general}), we will now rewrite the monomials of $p_1$ so that the resulting monomials will have their exponential vectors lying in the convex polytope $l_1P_1$ and the coefficients in~$L_1$.

For $0\le s\leq l_1$ and $0\le t< k_1$, $s+t=l_1$, we look at $q^sw^t$. Let $a_\alpha b_\beta  c^{\alpha}c^{\beta}$ be a monomial occurring in $q^sw^t$, where $a_\alpha c^\alpha$ is a monomial in $w^t$, and $b_\beta c^\beta$ is a monomial in $q^s$. Note that $a_\alpha b_\beta\in L_0^{l_1}=J_0^{l_1}I^{l_1}$, and so $a_\alpha b_\beta=u_0v_0$ for some $u_0\in J_0^{l_1}$ and $v_0\in I^{l_1}$. (In general, $J_0^{l_1} I^{l_1}$ consists of the finite sums of products of elements from $J_0^{l_1}$ and that from $I^{l_1}$, but here we are only looking at a single monomial $a_\alpha b_\beta  c^{\alpha}c^{\beta}$.)

Choose an arbitrary monomial $y_0=b_\gamma c^\gamma$ in $q$ (thus, $\gamma\in P_1$). As $c^{\alpha}$ and $y_0^t$ have the same degree $tm<k_1m$, there is some element $v_0'\in RI^{l_1-tm}$ such that $a_\alpha b_\beta c^\alpha=u_0v_0c^\alpha=u_0v_0' c^{t\gamma}$. Thus $a_\alpha b_\beta  c^{\alpha}c^{\beta}=u_0v_0'c^{t\gamma+\beta}$. From $t\gamma+\beta\in tP_1+sP_1= l_1P_1$, we see that $u_0v_0'c^{t\gamma+\beta}$ is a monomial with the exponential vector in $l_1P_1$ and coefficient in $J_0^{l_1}I$.

As a result, we get
$$
p_1=\sum_{i_1+\ldots+i_n=ml_1} b_{i_1,\ldots,i_n} c_1^{i_1}\cdots c_n^{i_n}=1,
$$
where all coefficients $b_{i_1,\ldots,i_n}$ of $p_1$ are in $L_1$ and all exponential vectors of the monomials fall into $l_1P_1=\cup_{i=2} l_1K_i$.

To illustrate the iteration process,
we let $w_1=\sum_{\alpha\in l_1K_2}b_\alpha c^\alpha$ be the sum of the monomials of $p_1$ having their exponential vectors in $l_1K_2$. Again, we claim that $w_1$ is nilpotent. Pick an arbitrary monomial $b_{\alpha_1} c^{\alpha_1}$ from $w_1$, $\alpha_1\in l_1K_2$.  For all $\alpha\in l_1K_2$, we have $b_{\alpha}\in L_1=J_1I=I^{2l_1n+1}$. If $\alpha''\in l_1K_2$, then there is some $b''\in R$, depending on $\alpha''$, such that $b_{\alpha''}c^{\alpha''}=b''c^{\alpha_1}$ as we have seen in (\ref{eq:change-general}). In this way, we see that $w_1=(b_{\alpha_1}+\sum_{\alpha''\in l_1K_2\setminus\{\alpha_1\}}b'')c^{\alpha_1}$. Since $b_{\alpha_1}+\sum_{\alpha''\in l_1K_2\setminus\{\alpha_1\}}b''\in R$ is nilpotent, we conclude that $w_1=\sum_{\alpha\in l_1K_2}b_{\alpha} c^{\alpha}$ is indeed nilpotent.

Next we write $p_1=w_1+q_1$, where the exponential vectors of the monomials of $q_1$ all fall into $l_1P_1\setminus l_1 K_2\subseteq P_2=\cup_{i=3}^{\mu} l_1K_i$, and $P_2$ is a convex polytope.

Again, due to the Minkowski sum property, for any positive integer $\ell$ the monomials of $q_1^\ell$ will have the property that the convex hull of their exponential vectors is contained in $\ell P_2$.

If $w_1$ is zero, we would proceed to the next iteration. Thus, we assume that $w_1$ is nonzero. Let $k_2$ be a positive integer such that $w_1^{k_2}=0$. Let $l_2=l_1mk_2$, and set
$$
p_2=p_1^{l_2}=(q_1+w_1)^{l_2}=
q_1^{l_2}+\binom{l_2}{1}q_1^{l_2-1}w_1+\ldots+\binom{l_2}{k_2-1}q_1^{l_2-k_2+1}w_1^{k_2-1}.
$$
All monomials in $p_2$ have coefficients in $L_1^{l_2}= J_1^{l_2}I^{l_2}$. We put $J_2=J_1^{l_2}$, and $L_2=J_2I$.

Using (\ref{eq:change-general}), we will rewrite the monomials of $p_2$ so that the resulting monomials will have their exponential vectors lying in the convex polytope $l_2P_2$ and the coefficients in~$L_2$.

For $0\le s\leq l_2$ and $0\le t< k_2$, $s+t=l_2$, we look at $q_1^sw_1^t$. Let $b_{\alpha''} e_{\beta''}  c^{\alpha''}c^{\beta''}$ be a monomial occurs in $q_1^sw_1^t$, where $b_{\alpha''}c^{\alpha''}$ is a monomial in $w_1^t$, and $e_{\beta''} c^{\beta''}$ is a monomial in $q_1^s$. Note that $b_{\alpha''} e_{\beta''}\in L_1^{l_2}=J_1^{l_2}I^{l_2}$, and so $b_{\alpha''} e_{\beta''}=u_1v_1$ for some $u_1\in J_1^{l_2}$ and $v_1\in I^{l_2}$.

Choose an arbitrary monomial $y_1=b_{\gamma''} c^{\gamma''}$ in $q_1$ (thus, $\gamma''\in P_2$). As $c^{\alpha''}$ and $y_1^t$ have the same degree $l_1tm<l_1k_2m$, there is some element $v_1''\in RI^{l_1k_2m-l_1tm}$ such that $b_{\alpha''} e_{\beta''} c^{\alpha''}=u_1v_1''c^{\alpha''}=u_1v_1'' c^{t\gamma''}$. Thus $b_{\alpha''} e_{\beta''} c^{\alpha''}c^{\beta''}=u_1v_1''c^{t\gamma''+\beta''}$. From $t\gamma''+\beta''\in tP_2+sP_2= l_2P_2$, we see that $u_1v_1''c^{t\gamma''+\beta''}$ is a monomial with the exponential vector in $l_2P_2$ and coefficient in $J_1^{l_2}I$.

As a result, we get
$$
p_2=\sum_{i_1+\ldots+i_n=ml_1l_2} e_{i_1,\ldots,i_n} c_1^{i_1}\cdots c_n^{i_n}=1,
$$
where all coefficients $e_{i_1,\ldots,i_n}$ of $p_2$ are in $L_2$ and all exponential vectors of the monomials fall into $l_2P_2=\cup_{i=3} l_1l_2K_i$.

As we proceed the iteration steps, we will finally arrive at
$$
p_{\mu-1}=\sum_{i_1+\ldots+i_n=ml_1l_2\cdots l_{\mu-1}} z_{i_1,\ldots,i_n} c_1^{i_1}\cdots c_n^{i_n}=1,
$$
where all coefficients $z_{i_1,\ldots,i_n}$ of $p_{\mu-1}$ are in $L_{\mu-1}=J_{\mu-1}I$, $J_{\mu-1}=J_0^{l_1l_2\cdots l_{\mu-1}}$, and all exponential vectors of the monomials of $p_{\mu-1}$ fall into $l_{\mu-1}P_{\mu-1}= l_1l_2\cdots l_{\mu-1}K_{\mu}$. Again, using $J_{\mu-1}$, we can write $p_{\mu-1}$ as a single monomial with coefficient in $R$, and so $p_{\mu-1}$ is nilpotent. But then, $1=p_{\mu-1}$ is nilpotent, a contradiction.

\begin{ack}
We would like to thank the National Center of Theoretical Sciences, Taiwan, for the support in organizing short visits of the first and the fourth authors.
\end{ack}

\end{document}

The intersection of two n-spheres is an (n-1)-sphere
J. S. Carter, How Surfaces Intersect in Space: An Introduction to Topology. p. 273.